\newtheorem{thm}{Theorem}
\newtheorem{prop}[thm]{Proposition}
\newtheorem{lem}[thm]{Lemma}
\newtheorem{conj}[thm]{Conjecture}
\theoremstyle{remark}
\newtheorem{rem}[thm]{Remark}
\newtheorem{notn}[thm]{Notation}
\title{On the Diophantine Equation $2^a3^b + 2^c3^d = 2^e3^f + 2^g3^h$}
\author{Roger Tian}
\date{\today}
\newcommand\ackname{Acknowledgements}
  \newenvironment{acknowledgements}{%
      \titlepage
      \null\vfil
      \@beginparpenalty\@lowpenalty
      \begin{center}%
        \bfseries \ackname
        \@endparpenalty\@M
      \end{center}}%
     {\par\vfil\null\endtitlepage}
  \newenvironment{acknowledgements}{%
      \if@twocolumn
        \section*{\abstractname}%
      \else
        \small
        \begin{center}%
          {\bfseries \ackname\vspace{-.5em}\vspace{\z@}}%
        \end{center}%
        \quotation
      \fi}
      {\if@twocolumn\else\endquotation\fi}
\begin{document}
\maketitle
\begin{abstract}
This paper is a continuation of \cite{friedprop}, in which I studied Harvey Friedman's problem of whether the function $f(x,y) = x^2 + y^3$ satisfies any identities; however, no knowledge of \cite{friedprop} is necessary to understand this paper. We will break the exponential Diophantine equation $2^a3^b + 2^c3^d = 2^e3^f + 2^g3^h$ into subcases that are easier to analyze. Then we will solve an equation obtained by imposing a restriction on one of these subcases, after which we will solve a generalization of this equation.
\end{abstract}
\begin{acknowledgements}
I would like to thank my thesis advisor, George Bergman, for giving advice on how to better organize this paper and make it more readable, and for pointing out areas of my paper that needed clarification.
\end{acknowledgements}

We will follow the convention that $0 \notin \mathbb{N}$.

A resticted version of Friedman's problem (mentioned in the Abstract) I studied in my paper \cite{friedprop} is related to the solution set of the equation \begin{equation}\label{deg equation three} 2^a3^b + 2^c3^d = 2^e3^f + 2^g3^h. \end{equation} The results we will get on this equation, which are still very partial, will not be applied in this paper to Friedman's problem. 

Suppose $a_0$, $b_0$, $c_0$, $d_0$, $e_0$, $f_0$, $g_0$, $h_0$ are nonnegative integers such that \begin{equation}\label{deg equation four} 2^{a_0}3^{b_0} + 2^{c_0}3^{d_0} = 2^{e_0}3^{f_0} + 2^{g_0}3^{h_0}. \end{equation} Without loss of generality, we may assume that $\min\{a_0,c_0,e_0,g_0\} = 0 = \min\{b_0,d_0,f_0,h_0\}$, or equivalently $0 \in \{a_0,c_0,e_0,g_0\}$ and $0 \in \{b_0,d_0,f_0,h_0\}$; we can always divide (\ref{deg equation four}) by $2^{\min\{a_0,c_0,e_0,g_0\}}3^{\min\{b_0,d_0,f_0,h_0\}}$. Suppose that there is exactly one zero in $\{a_0,c_0,e_0,g_0\}$. Then (\ref{deg equation four}) reduces to an equation in which exactly three of its four terms contain a factor of 2, so one side of this resulting equation is divisible by 2 while the other side is not, which is a contradiction. Thus, there must be at least two zeros in $\{a_0,c_0,e_0,g_0\}$ and, by the same reasoning, with the factor 2 replaced by the factor 3, there must be at least two zeros in $\{b_0,d_0,f_0,h_0\}$. Then, depending on which terms of (\ref{deg equation four}) the zeros occur in, we can reduce Equation (\ref{deg equation three}) to 36 cases. However, merging the cases that are identical up to permutations of the summands, we get the following seven equations: 
\begin{equation}\label{degeqn five} 1 + 1 = 2^e3^f + 2^g3^h \end{equation}
\begin{equation}\label{degeqn six} 1 + 3^d = 2^e + 2^g3^h \end{equation}
\begin{equation}\label{degeqn seven} 3^b + 3^d  = 2^e + 2^g \end{equation}
\begin{equation}\label{degeqn eight} 1 + 2^c = 3^f + 2^g3^h \end{equation}
\begin{equation}\label{degeqn nine} 1 + 2^c3^d = 1 + 2^g3^h \end{equation}
\begin{equation}\label{degeqn ten} 3^b + 2^c = 1 + 2^g3^h \end{equation}
\begin{equation}\label{degeqn eleven} 3^b + 2^c = 3^f + 2^g \end{equation}
Note, for instance, that in the case $a_0 = b_0 = c_0 = f_0 = 0$, Equation (\ref{degeqn six}) must have at least one solution. The solution to (\ref{degeqn five}) is $(e,f,g,h) = (0,0,0,0)$. The solutions to (\ref{degeqn nine}) are $(c,d,g,h) = (s,t,s,t)$ for all nonnegative integers $s$ and $t$. We now solve (\ref{degeqn ten}) subject to the restiction $b = h$, i.e. the equation $2^c - 1 = 3^b(2^g - 1)$. We first prove a few lemmas.

\begin{lem}
\label{cyclotomic division}
Let $p, m, n \in \mathbb{N} \cup \{0\}$ where $p > 1$ and $m > 0$. If $p^m - 1 \mid p^n - 1$, then $m \mid n$.
\end{lem}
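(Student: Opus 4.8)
The plan is to reduce the problem to the division algorithm plus the single elementary identity $x^{q}-1=(x-1)(x^{q-1}+\cdots+x+1)$. First I would write $n=qm+r$ with $q\ge 0$ and $0\le r<m$ (possible since $m>0$), and then split
\[
p^{n}-1=p^{qm+r}-1=p^{r}\bigl(p^{qm}-1\bigr)+\bigl(p^{r}-1\bigr).
\]
Applying the factorization above with $x=p^{m}$ shows $p^{m}-1\mid (p^{m})^{q}-1=p^{qm}-1$, hence $p^{m}-1$ divides the first summand on the right. Since $p^{m}-1\mid p^{n}-1$ by hypothesis, subtracting gives $p^{m}-1\mid p^{r}-1$.

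The second step is to observe that this forces $r=0$. Indeed $p^{r}-1\ge 0$, and because $r<m$ and $p\ge 2$ we have $p^{r}\le p^{m-1}<p^{m}$, so $0\le p^{r}-1<p^{m}-1$. A nonnegative integer strictly less than $p^{m}-1$ that is divisible by $p^{m}-1$ must be $0$, so $p^{r}=1$; as $p>1$ this yields $r=0$, i.e. $m\mid n$. The degenerate case $n=0$ is already covered, since then $p^{n}-1=0$ and $m\mid 0$ trivially (and correspondingly $q=r=0$ in the argument); the hypothesis $m>0$ is exactly what is needed for the division algorithm and for the strict bound $p^{r}-1<p^{m}-1$ to make sense.

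There is really no substantial obstacle here: the only care required is bookkeeping — applying the factorization to the base $p^{m}$ rather than $p$, and checking that the remainder term $p^{r}-1$ is simultaneously nonnegative and strictly below $p^{m}-1$ so that divisibility collapses it to zero. I note in passing that one could alternatively quote the identity $\gcd(p^{m}-1,p^{n}-1)=p^{\gcd(m,n)}-1$ and conclude $p^{m}-1\mid p^{\gcd(m,n)}-1$, forcing $\gcd(m,n)=m$; but proving that gcd formula itself runs through essentially the same Euclidean computation, so the direct argument above is the more economical route.
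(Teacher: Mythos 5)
Your proof is correct, and it takes a genuinely different (and somewhat slicker) route than the paper. The paper proceeds by induction on the quotient $q$: at each step it subtracts $p^{m}-1$ from $p^{(q+1)m+r}-1$ to get $p^{m}\bigl(p^{qm+r}-1\bigr)$, invokes $\gcd(p^{m}-1,p^{m})=1$ to strip off the factor $p^{m}$, and so peels one copy of $m$ off the exponent per step, with the base case $q=0$ handled by the size argument $0\le p^{n}-1<p^{m}-1$. You instead avoid induction entirely by using the single decomposition $p^{n}-1=p^{r}\bigl(p^{qm}-1\bigr)+\bigl(p^{r}-1\bigr)$ together with the geometric-sum factorization $x^{q}-1=(x-1)(x^{q-1}+\cdots+1)$ applied to $x=p^{m}$, which disposes of the whole multiple $qm$ at once; the final size argument on $p^{r}-1$ is the same as the paper's base case. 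In effect your factorization step packages what the paper's induction does one layer at a time, so your write-up is shorter and needs no appeal to coprimality, while the paper's version is more self-contained in that it never cites the factorization identity explicitly. Both are complete; the degenerate cases ($r=0$, $n=0$) are handled correctly in your argument.
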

\begin{proof}
We have $n = qm + r$ where $q, r \in \mathbb{N} \cup \{0\}$ and $0 \le r < m$. We will prove this lemma by induction on $q$. For $q = 0$, we have $n < m$, so $p^n - 1 < p^m - 1$, from which it follows that $p^m - 1 \mid p^n - 1$ $\Longrightarrow$ $p^n - 1 = 0$ $\Longrightarrow$ $n = 0$ $\Longrightarrow$ $m \mid n$. Suppose the lemma is true for some $q$, we will prove it for $q + 1$. Suppose $p^m - 1 \mid p^n - 1 = p^{(q+1)m+r} - 1$. Then $p^m - 1$ divides $p^{(q+1)m+r} - 1 - (p^m - 1) = p^{(q+1)m+r} - p^m = p^m(p^{qm+r} - 1)$. Since $p^m - 1$ and $p^m$ are relatively prime, we have $p^m - 1 \mid p^{qm+r} - 1$. It follows from our inductive hypothesis that $m \mid qm + r$, so $r = 0$ and $n = (q+1)m$. This completes the induction.
\end{proof}

\begin{notn}
\label{div max}
Let $n, m, k \in \mathbb{N}.$ By $n^k \parallel m$ we will always mean that $n^k \mid m$ and $n^{k+1} \nmid m$.
\end{notn}

\begin{lem}
\label{power three minus}
If $k, n \in \mathbb{N}$ where $k$ is odd, then $2^{n+2} \parallel 3^{2^nk} - 1$.
\end{lem}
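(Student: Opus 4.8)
The plan is to induct on $n$, exploiting the factorization $X^2-1=(X-1)(X+1)$ together with the elementary fact that the square of an odd integer is congruent to $1$ modulo $8$. Throughout, recall that by our convention $n\ge 1$ and $k\ge 1$.

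For the base case $n=1$, I would write $3^{2k}-1=(3^k-1)(3^k+1)$. Since $k$ is odd and $3^2\equiv 1\pmod 8$, we get $3^k\equiv 3\pmod 8$, hence $3^k-1\equiv 2\pmod 8$ and $3^k+1\equiv 4\pmod 8$. Thus $2^1\parallel 3^k-1$ and $2^2\parallel 3^k+1$, and multiplying these gives $2^3\parallel 3^{2k}-1$, which is the assertion for $n=1$. For the inductive step, assume $2^{n+2}\parallel 3^{2^nk}-1$ for every odd $k$. Then
\[
3^{2^{n+1}k}-1=\bigl(3^{2^nk}-1\bigr)\bigl(3^{2^nk}+1\bigr).
\]
The first factor contributes exactly $2^{n+2}$ by the inductive hypothesis. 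For the second factor, note that $3^{2^nk}$ is an odd perfect square (using $n\ge 1$), so $3^{2^nk}\equiv 1\pmod 8$, whence $3^{2^nk}+1\equiv 2\pmod 8$ and $2^1\parallel 3^{2^nk}+1$. Multiplying, $2^{n+3}\parallel 3^{2^{n+1}k}-1$, completing the induction.

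There is no serious obstacle here; the only point requiring care is to keep the statements as \emph{exact} powers rather than mere divisibilities, which is why I compute residues modulo $8$ (not merely modulo $4$), and in the inductive step one must remember that $n\ge 1$ so that $3^{2^nk}$ really is a square — this is exactly what pins the valuation of the ``$+1$'' factor at precisely $1$. As an alternative to induction, one could telescope directly: with $x=3^k$ one has $3^{2^nk}-1=(x-1)(x+1)\prod_{j=1}^{n-1}(x^{2^j}+1)$, where $v_2(x-1)=1$, $v_2(x+1)=2$, and $v_2(x^{2^j}+1)=1$ for each of the $n-1$ remaining factors, giving $1+2+(n-1)=n+2$; this is essentially the $a=3^k$, $b=1$ case of the lifting-the-exponent lemma.
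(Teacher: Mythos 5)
Your proof is correct. It has the same overall skeleton as the paper's — induction on $n$ with $k$ fixed — but the mechanics differ at both stages. For the base case the paper expands $(8+1)^k-1$ by the binomial theorem and observes the resulting sum is odd, whereas you factor $3^{2k}-1=(3^k-1)(3^k+1)$ and read off the exact $2$-adic valuations $1$ and $2$ from residues modulo $8$ (using that $k$ is odd). For the inductive step the paper writes $3^{2^nk}-1=2^{n+2}l$ with $l$ odd and expands $(2^{n+2}l+1)^2-1=2(2^{n+2}l)(2^{n+1}l+1)$ explicitly; you instead use the same factorization $X^2-1=(X-1)(X+1)$ but pin the valuation of $X+1$ at exactly $1$ by noting $X=3^{2^nk}$ is an odd square, hence $\equiv 1 \pmod 8$ — an algebraically equivalent but cleaner bookkeeping, and you are right that $n\ge 1$ is exactly what makes this work. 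Your telescoping alternative, $x^{2^n}-1=(x-1)(x+1)\prod_{j=1}^{n-1}(x^{2^j}+1)$ with $x=3^k$, is genuinely different from anything in the paper: it avoids induction entirely, makes the count $1+2+(n-1)=n+2$ transparent, and is the standard lifting-the-exponent computation; its only cost is that one must still justify each factor's valuation by the same mod-$8$ observations, so it is a repackaging rather than a saving of ideas.
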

\begin{proof}
We will first prove this claim for $n = 1$. We have $3^{2k} - 1 = 9^k - 1 = (8 + 1)^k - 1 = -1 + \sum_{j=0}^k{{k \choose j}8^j} = \sum_{j=1}^k{{k \choose j}8^j} = 8 \sum_{j=1}^k{{k \choose j}8^{j-1}}$, and we see that $\sum_{j=1}^k{{k \choose j}8^{j-1}}$ is odd because the $j = 1$ term of this sum is odd and all other terms of this sum are even. Now suppose the claim is true for some $n \ge 1$. Then there exists an $l \in \mathbb{N}$ such that $l$ is odd and $3^{2^{n+1}k} - 1 = (3^{2^nk})^2 - 1 = (3^{2^nk} - 1 + 1)^2 - 1 = (2^{n+2}l + 1)^2 - 1 = 2^{2(n+2)}l^2 + 2(2^{n+2}l) + 1 - 1 = 2^{2(n+2)}l^2 + 2(2^{n+2}l) = 2(2^{n+2}l)(2^{n+1}l + 1)$, and we see that $2^{n+1}l + 1$ is odd and $2^{n+3} \parallel 3^{2^{n+1}k} - 1$. This completes the induction.
\end{proof}

\begin{lem}
\label{power three plus}
If $m$ is odd, then $2^2 \parallel 3^m + 1$.
\end{lem}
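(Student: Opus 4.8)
The plan is to pin down $3^m+1$ modulo $8$ and show it is always congruent to $4$ when $m$ is odd; this simultaneously gives $2^2 \mid 3^m+1$ and $2^3 \nmid 3^m+1$, which is exactly the assertion $2^2 \parallel 3^m+1$. The only input needed is that $3^2 = 9 \equiv 1 \pmod 8$, so that the powers of $3$ are eventually periodic modulo $8$ with period dividing $2$.

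First I would write $m = 2t+1$ with $t \in \mathbb{N}\cup\{0\}$, which is possible precisely because $m$ is odd. Then
\[
3^m + 1 = 3\cdot(3^2)^t + 1 = 3\cdot 9^t + 1 \equiv 3\cdot 1^t + 1 = 4 \pmod 8,
\]
using $9 \equiv 1 \pmod 8$. From $3^m + 1 \equiv 4 \pmod 8$ we read off that $4 \mid 3^m+1$ while $8 \nmid 3^m+1$, i.e. $2^2 \parallel 3^m+1$ in the sense of Notation~\ref{div max}.

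As an alternative route, one could instead use the algebraic factorization valid for odd exponents,
\[
3^m + 1 = (3+1)\bigl(3^{m-1} - 3^{m-2} + \cdots - 3 + 1\bigr) = 4\cdot\sum_{j=0}^{m-1}(-1)^j 3^{m-1-j},
\]
and observe that the second factor is a sum of $m$ terms, each of which is $\pm$ an odd number; since $m$ is odd, this sum is odd, so $4 = 2^2$ is exactly the power of $2$ dividing $3^m+1$. Either way there is no real obstacle here: the statement is a short exercise in modular arithmetic, and the only point requiring the hypothesis ``$m$ odd'' is the step that forces $3^m \equiv 3 \pmod 8$ (equivalently, that the cofactor above has an odd number of odd summands). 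I would present the congruence argument as the main proof and relegate the factorization remark to a parenthetical.
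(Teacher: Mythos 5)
Your proof is correct, and it takes a genuinely different (and more direct) route than the paper. The paper proves the lemma by induction on odd $m$: the base case is $3^1+1=4$, and in the step it writes $3^m+1=4k$ with $k$ odd and computes $3^{m+2}+1 = 9(4k-1)+1 = 4(9k-2)$, noting $9k-2$ is again odd. You instead observe that $9 \equiv 1 \pmod 8$ forces $3^m \equiv 3 \pmod 8$ for odd $m$, so $3^m+1 \equiv 4 \pmod 8$, which immediately yields both divisibility by $2^2$ and non-divisibility by $2^3$; your alternative via the factorization $3^m+1 = 4\sum_{j=0}^{m-1}(-1)^j 3^{m-1-j}$ with an odd cofactor is equally valid. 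Your congruence argument is shorter and avoids induction altogether, while the paper's inductive computation has the stylistic advantage of matching the structure of the surrounding lemmas (Lemmas \ref{power three minus}, \ref{power two minus}, \ref{power two plus}), all of which are proved by induction on the exponent parameter; mathematically, nothing is lost either way, and your mod-$8$ observation is arguably the cleaner way to see why the hypothesis that $m$ is odd is exactly what is needed.
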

\begin{proof}
Notice that for $m = 1$ we have $3^m + 1 = 3 + 1 = 4$. Now suppose that the claim is true for some $m \ge 1$, where $m$ is odd. We have $3^{m+2} + 1 = 9 \cdot 3^m + 1 = 9(3^m + 1 - 1) + 1 = 9(4k - 1) + 1 = 36k - 9 + 1 = 36k - 8 = 4(9k - 2)$ where $k$ is odd, and we see that $9k - 2$ is also odd. This completes the induction.
\end{proof}

\begin{lem}
\label{power two minus}
If $m_1, l \in \mathbb{N}$ and $m_2 \in \mathbb{N} \cup \{0\}$ where $2, 3 \nmid l$, then $3^{m_2+1} \parallel 2^{2^{m_1}3^{m_2}l} - 1$.
\end{lem}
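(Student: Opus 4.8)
The plan is to pin down the exact power of $3$ dividing $2^{2^{m_1}3^{m_2}l}-1$ by a nested induction, the outer one on $m_2$ and the inner one (used only in the base case) on $m_1$. Observe first that since $m_1\ge 1$ the exponent $2^{m_1}3^{m_2}l$ is even, hence $2^{2^{m_1}3^{m_2}l}\equiv 1\pmod 3$ and $3$ does divide $2^{2^{m_1}3^{m_2}l}-1$; all the work is in showing it divides it exactly $m_2+1$ times. Two elementary factorizations drive everything: $x^2-1=(x-1)(x+1)$ and $x^3-1=(x-1)(x^2+x+1)$.

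\textbf{Base case $m_2=0$.} I would prove $3\parallel 2^{2^{m_1}l}-1$ for all $m_1\in\mathbb{N}$ and all $l$ with $2,3\nmid l$, by induction on $m_1$. For $m_1=1$, expand $2^{2l}-1=4^l-1=(1+3)^l-1=\sum_{j=1}^{l}\binom{l}{j}3^j=3\sum_{j=1}^{l}\binom{l}{j}3^{j-1}$; modulo $3$ only the $j=1$ term of the last sum survives, and it equals $l\not\equiv 0\pmod 3$, so the sum is coprime to $3$ and $3\parallel 4^l-1$. (This is the binomial trick already used in Lemma \ref{power three minus}.) For $m_1\to m_1+1$, write $2^{2^{m_1+1}l}-1=(2^{2^{m_1}l}-1)(2^{2^{m_1}l}+1)$: the first factor carries exactly one $3$ by the inductive hypothesis, and $2^{2^{m_1}l}+1\equiv 1+1\equiv 2\pmod 3$ carries none, so $3\parallel 2^{2^{m_1+1}l}-1$.

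\textbf{Inductive step $m_2\to m_2+1$.} Fix $m_1$ and $l$, and put $M=2^{m_1}3^{m_2}l$, so that $2^{m_1}3^{m_2+1}l=3M$. By the inductive hypothesis $3^{m_2+1}\parallel 2^{M}-1$, say $2^{M}=1+3^{m_2+1}t$ with $3\nmid t$. Then $2^{3M}-1=(2^{M}-1)(2^{2M}+2^{M}+1)$, and substituting and expanding gives $2^{2M}+2^{M}+1=3\left(1+3^{m_2+1}t+3^{2m_2+1}t^2\right)$, whose parenthesized factor is $\equiv 1\pmod 3$ since $m_2+1\ge 1$ and $2m_2+1\ge 1$. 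Hence $3\parallel 2^{2M}+2^{M}+1$, and since the exponent of $3$ in a product is the sum of the exponents in the factors, $3^{(m_2+1)+1}\parallel (2^{M}-1)(2^{2M}+2^{M}+1)=2^{2^{m_1}3^{m_2+1}l}-1$. This is exactly the assertion for $m_2+1$, completing the induction.

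\textbf{Where the difficulty lies.} There is no genuine obstacle: this is the lifting-the-exponent phenomenon for the base $2$ and the odd prime $3$, and the whole argument is elementary. The only points requiring care are organizing the nested induction so that the base case $m_2=0$ is itself handled by induction on $m_1$, and confirming that the cofactors $2^{2^{m_1}l}+1$ and $1+3^{m_2+1}t+3^{2m_2+1}t^2$ are coprime to $3$ — each of which reduces to a one-line computation modulo $3$.
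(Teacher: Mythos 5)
Your proof is correct and follows essentially the same route as the paper: induction on $m_2$, lifting the exponent of $3$ by exactly one at each step. The only cosmetic differences are that the paper handles the base case $m_2=0$ for all $m_1$ at once by expanding $(3+1)^{2^{m_1-1}l}$ binomially (rather than your inner induction on $m_1$ via $x^2-1=(x-1)(x+1)$), and its inductive step cubes $3^{m_2+1}l_1+1$ directly, which yields the same factorization you obtain from $2^{3M}-1=(2^{M}-1)(2^{2M}+2^{M}+1)$.
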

\begin{proof}
We will first prove this claim for $m_2 = 0$. Notice that $2^{2^{m_1}l} - 1 = 4^{2^{m_1-1}l} - 1 = (3 + 1)^{2^{m_1-1}l} - 1 = \sum_{i=1}^{2^{m_1-1}l}{{2^{m_1-1}l \choose i}3^i} + 1 - 1 = \sum_{i=1}^{2^{m_1-1}l}{{2^{m_1-1}l \choose i}3^i} = \sum_{i=2}^{2^{m_1-1}l}{{2^{m_1-1}l \choose i}3^i} + 2^{m_1-1}l \cdot 3 = 3(\frac{1}{3} \sum_{i=2}^{2^{m_1-1}l}{{2^{m_1-1}l \choose i}3^i} + 2^{m_1-1}l)$, and we see that $3 \nmid \frac{1}{3} \sum_{i=2}^{2^{m_1-1}l}{{2^{m_1-1}l \choose i}3^i} + 2^{m_1-1}l$. Now suppose for some $m_2 \ge 0$ we have $2^{2^{m_1}3^{m_2}l} - 1 = 3^{m_2+1}l_1$ where $3 \nmid l_1$. Then $2^{2^{m_1}3^{m_2}l} = 3^{m_2+1}l_1 + 1 \Longrightarrow 2^{2^{m_1}3^{m_2+1}l} = (3^{m_2+1}l_1 + 1)^3 = 3^{3(m_2+1)}l_1^3 + 3 \cdot 3^{2(m_2+1)}l_1^2 + 3 \cdot 3^{m_2+1}l_1 + 1$, so $2^{2^{m_1}3^{m_2+1}l} - 1 = 3 \cdot 3^{m_2+1}l_1(3^{2(m_2+1)-1}l_1^2 + 3^{m_2+1}l_1 + 1)$. Since $3 \nmid l_1(3^{2(m_2+1)-1}l_1^2 + 3^{m_2+1}l_1 + 1)$, we have $3^{m_2+2} \parallel 2^{2^{m_1}3^{m_2+1}l} - 1$. This completes the induction.
\end{proof}

\begin{lem}
\label{power two plus}
If $m_1, l \in \mathbb{N} \cup \{0\}$ where $2, 3 \nmid l$, then $3^{m_1+1} \parallel 2^{3^{m_1}l} + 1$.
\end{lem}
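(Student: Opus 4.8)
The plan is to avoid redoing an induction from scratch and instead \emph{derive} this lemma from Lemma~\ref{power two minus}, via the factorization $2^{2n}-1=(2^n-1)(2^n+1)$. Note first that the hypothesis $2,3\nmid l$ forces $l\ge 1$, so the exponent $3^{m_1}l$ is a positive odd integer; hence $2^{3^{m_1}l}\equiv 2\equiv -1\pmod 3$, and therefore $2^{3^{m_1}l}-1\equiv 1\pmod 3$, i.e. $3\nmid 2^{3^{m_1}l}-1$.

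Next, I would apply Lemma~\ref{power two minus} with the triple $(m_1,m_2,l)$ appearing there taken to be $(1,\,m_1,\,l)$ (our $l$ does satisfy $2,3\nmid l$, and $1\in\mathbb{N}$, $m_1\in\mathbb{N}\cup\{0\}$, so the hypotheses are met). This yields
\[
3^{m_1+1}\parallel 2^{2\cdot 3^{m_1}l}-1.
\]

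Finally, write $2^{2\cdot 3^{m_1}l}-1=\bigl(2^{3^{m_1}l}-1\bigr)\bigl(2^{3^{m_1}l}+1\bigr)$. Since $3\nmid 2^{3^{m_1}l}-1$ by the first paragraph, every one of the $m_1+1$ factors of $3$ dividing the left-hand side must divide $2^{3^{m_1}l}+1$, and no further factor of $3$ can appear there (else it would appear on the left). Hence $3^{m_1+1}\parallel 2^{3^{m_1}l}+1$, which is the claim.

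I do not expect a real obstacle: the only points to verify are the elementary congruence $3\nmid 2^{\text{odd}}-1$ and that the substituted parameters satisfy the hypotheses of Lemma~\ref{power two minus}. If a self-contained argument in the style of Lemma~\ref{power two minus} were preferred instead, one would induct on $m_1$: for the base case $m_1=0$, expand $2^l+1=(3-1)^l+1$ binomially, noting that with $l$ odd the $i=0$ term cancels the $+1$, the $i=1$ term equals $3l$, and all remaining terms are divisible by $9$, so $3\parallel 2^l+1$ because $3\nmid l$; for the inductive step, cube the relation $2^{3^{m_1}l}=3^{m_1+1}l_1-1$ (with $3\nmid l_1$) and expand exactly as in Lemma~\ref{power two minus}. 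The reduction above is shorter, so that is the route I would take.
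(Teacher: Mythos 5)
Your reduction is correct, but it is genuinely different from the paper's argument. The paper proves this lemma by a self-contained induction on $m_1$, exactly parallel to its proof of Lemma~\ref{power two minus}: the base case expands $2^l+1=(3-1)^l+1$ binomially (with $l$ odd the constant terms cancel and the $k=1$ term contributes $3l$ with $3\nmid l$), and the inductive step cubes the relation $2^{3^{m_1}l}=3^{m_1+1}l_1-1$ — this is precisely the fallback argument you sketch in your last paragraph. Your main route instead invokes Lemma~\ref{power two minus} with its parameters specialized to $(1,m_1,l)$ to get $3^{m_1+1}\parallel 2^{2\cdot 3^{m_1}l}-1$, factors $2^{2\cdot 3^{m_1}l}-1=(2^{3^{m_1}l}-1)(2^{3^{m_1}l}+1)$, and uses the congruence $2^{3^{m_1}l}\equiv -1\pmod 3$ (valid since $2\nmid l$ forces $l\ge 1$ odd, so the exponent is odd) to push the full power of $3$ onto the second factor. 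The parameter substitution is legitimate and there is no circularity, since Lemma~\ref{power two minus} is proved before and independently of this lemma. What your approach buys is economy: one multiplicativity-of-valuation observation replaces a second induction that essentially duplicates the computation in Lemma~\ref{power two minus}. What the paper's approach buys is structural parallelism — the two lemmas are proved by the same template, which is the template later generalized in Lemma~\ref{power odd minus} — and independence of the "$+1$" statement from the "$-1$" statement, but logically your shorter derivation is perfectly sound.
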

\begin{proof}
We will first prove this claim for $m_1 = 0$. Notice that $2^l + 1 = (3 - 1)^l + 1 = \sum_{k=1}^l{{l \choose k}3^k(-1)^{l-k}} + (-1)^l + 1 = \sum_{k=1}^l{{l \choose k}3^k(-1)^{l-k}} - 1 + 1 = \sum_{k=1}^l{{l \choose k}3^k(-1)^{l-k}} = 3 \sum_{k=1}^l{{l \choose k}3^{k-1}(-1)^{l-k}} = 3(\sum_{k=2}^l{{l \choose k}3^{k-1}(-1)^{l-k}} + l)$, and we see that $3 \nmid \sum_{k=2}^l{{l \choose k}3^{k-1}(-1)^{l-k}} + l$. Suppose for some $m_1 \ge 0$ we have $2^{3^{m_1}l} + 1 = 3^{m_1 +1}l_1$ where $3 \nmid l_1$. Then we have $2^{3^{m_1}l} = 3^{m_1 + 1}l_1 - 1 \Longrightarrow 2^{3^{m_1+1}l} = (3^{m_1+1}l_1 - 1)^3 = 3^{3(m_1+1)}l_1^3 - 3 \cdot 3^{2(m_1+1)}l_1^2 + 3 \cdot 3^{m_1+1}l_1 - 1$, so $2^{3^{m_1+1}l} + 1 = 3 \cdot 3^{m_1+1}l_1(3^{2(m_1+1)-1}l_1^2 - 3^{m_1+1}l_1 + 1)$. Since $3 \nmid l_1(3^{2(m_1+1)-1}l_1^2 - 3^{m_1+1}l_1 + 1)$, we have $3^{m_1+2} \parallel 2^{3^{m_1+1}l} + 1$. This completes the induction.
\end{proof}

\begin{prop}
\label{cyclotomic diophantine}
The only solutions $(k,m,n)$ in the positive integers of the exponential Diophantine equation $3^k(2^m - 1) = 2^n - 1$ are $(1,1,2)$ and $(2,3,6)$.
\end{prop}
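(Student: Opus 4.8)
The plan is to use divisibility to pin $m$ (and then $n$) down almost completely in terms of $k$, and then to eliminate all but two small cases by a crude size comparison. Throughout, write $v_2$ and $v_3$ for the $2$-adic and $3$-adic valuations.

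First, $3^k(2^m-1) = 2^n-1$ gives $2^m-1 \mid 2^n-1$, so Lemma~\ref{cyclotomic division} yields $m \mid n$; write $n = mt$ with $t \in \mathbb{N}$. If $t=1$ the equation becomes $3^k = 1$, impossible, so $t \ge 2$ and
\[
3^k \;=\; \frac{2^{mt}-1}{2^m-1} \;=\; 1 + 2^m + 2^{2m} + \cdots + 2^{(t-1)m}.
\]
Subtracting $1$ and pulling out $2^m$ gives $3^k - 1 = 2^m\bigl(1 + 2^m + \cdots + 2^{(t-2)m}\bigr)$, whose second factor is odd since $m \ge 1$ makes every term past the first even; hence $2^m \parallel 3^k - 1$. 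Comparing with a direct evaluation of $v_2(3^k-1)$ — which is $1$ when $k$ is odd and $v_2(k)+2$ when $k$ is even, by Lemma~\ref{power three minus} — determines $m$ from $k$: either $k$ is odd and $m=1$, or $k$ is even and $m = v_2(k)+2 \ge 3$.

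If $k$ is odd then $m=1$ and the equation reads $2^n = 3^k+1$; since $k$ is odd, Lemma~\ref{power three plus} gives $2^2 \parallel 3^k+1$, forcing $n=2$ and hence $k=1$, the solution $(1,1,2)$. Now suppose $k$ is even. Applying $v_3$ to $3^k(2^m-1)=2^n-1$, and using $3 \mid 2^n-1 \Rightarrow n$ even together with Lemma~\ref{power two minus} (which gives $v_3(2^n-1)=v_3(n)+1$, while $v_3(2^m-1)$ is $v_3(m)+1$ if $m$ is even and $0$ if $m$ is odd), then substituting $n=mt$, the valuation identity simplifies in both parities of $m$ to $v_3(t) \ge k-1-v_3(m)$. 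Since $3^{v_3(m)} \le m$, this forces the lower bound $t \ge 3^{\,k-1}/m$.

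On the other hand $3^k = 1 + 2^m + \cdots + 2^{(t-1)m} \ge 2^{m(t-1)}$, so $m(t-1) \le k\log_2 3$ and hence $mt \le k\log_2 3 + m$. Combining, $3^{\,k-1} \le mt \le k\log_2 3 + m \le k\log_2 3 + \log_2 k + 2$ (using $v_2(k)\le\log_2 k$). For every even $k \ge 4$ the right-hand side is less than $3^{\,k-1}$ — it is below $4k$, whereas $3^{\,k-1}\ge 4k$ for $k\ge 4$ by an easy induction — a contradiction; and $k=2$ forces $m=3$, whence $2^n = 9\cdot 7 + 1 = 64$ and $n=6$, giving $(2,3,6)$. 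This exhausts all cases. I expect the one delicate point to be the $3$-adic bookkeeping in the even-$k$ case: one must track the parity of $m=v_2(k)+2$, which varies with $k$, so as to invoke Lemma~\ref{power two minus} only under its hypotheses, and then check that both the $m$-odd and $m$-even subcases collapse to the same clean bound $t \ge 3^{\,k-1}/m$. Everything else is routine.
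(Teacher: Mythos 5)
Your proof is correct. It shares the paper's central engine --- Lemma \ref{cyclotomic division} to write $n = mt$, the factorization $3^k = 1 + 2^m + \cdots + 2^{(t-1)m}$ giving an upper bound on $n$ of order $k\log_2 3$, and Lemma \ref{power two minus} giving the lower bound $v_3(n) \ge k-1$ and hence $n \ge 2\cdot 3^{k-1}$; the contradiction for large $k$ is exactly the tension described in Remark \ref{diophantine explanation}. What you do differently is to extract more from the factorization: reading off $2^m \parallel 3^k - 1$ pins $m$ down exactly as $v_2(3^k-1)$, which is where Lemmas \ref{power three minus} and \ref{power three plus} enter (the paper states both but uses neither in its own proof of this proposition). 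This buys you an immediate disposal of all odd $k$ via $2^2 \parallel 3^k+1$, and reduces the even case to the single numerical inequality $3^{k-1} \le k\log_2 3 + \log_2 k + 2$, in place of the paper's induction establishing $3^{2k} < 2^{2^{m_1}3^{k-1}l_1}-1$ for $k \ge 3$ followed by hand enumeration of $(m,n)$ for $k = 1, 2$. One pedantic point: under the paper's convention $0 \notin \mathbb{N}$, Lemma \ref{power three minus} covers only even exponents, so the evaluation $v_2(3^k-1)=1$ for odd $k$ needs the (trivial) separate observation $3^k \equiv 3 \pmod 4$. The $3$-adic bookkeeping you flagged as delicate does check out in both parities of $m$: for $m$ odd one gets $v_3(t) = k-1-v_3(m)$ exactly, for $m$ even one gets $v_3(t)=k$, and both imply $t \ge 3^{k-1}/m$ as you claim.
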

\begin{proof}
We know from Lemma \ref{cyclotomic division} that $n = lm$ for some $l \in \mathbb{N}$. Since $k > 0$, we must have $l \ge 2$. Notice that $2^n - 1 = 2^{lm} - 1 = (2^m - 1)(2^{(l-1)m} + 2^{(l-2)m} + \ldots + 2^m + 1) = 3^k(2^m - 1)$, so \begin{equation}\label{newlabel14} 3^k = 2^{(l-1)m} + 2^{(l-2)m} + \ldots + 2^m + 1 > 2^m - 1. \end{equation} It follows that $3^{2k} > 3^k(2^m - 1)$, so \begin{equation}\label{inequality one} 3^{2k} > 2^n - 1. \end{equation}

Now, by Lemma \ref{power two minus} we have $3^k \mid 2^n - 1$ $\Longrightarrow$ $n = 2^{m_1}3^{m_2}l_1$ where $m_1, l_1 \in \mathbb{N}$ and $m_2 \in \mathbb{N} \cup \{0\}$ such that $2, 3 \nmid l_1$ and $m_2 \ge k - 1$. Note that $m_2 = k - 1$ if $3^k \parallel 2^n - 1$. 

We shall now prove by induction that $3^{2k} < 2^{2^{m_1}3^{k-1}l_1} - 1$ for every $k \ge 3$ and all choices of $m_1$ and $l_1$. It is easy to check that, for all choices of $m_1$ and $l_1$, we have $3^{2k} < 2^{2^{m_1}3^{k-1}l_1} - 1$ for $k = 3$. Suppose we know, for some value of $k \ge 3$, that $3^{2k} < 2^{2^{m_1}3^{k-1}l_1} - 1$, or equivalently $3^{2k} + 1 < 2^{2^{m_1}3^{k-1}l_1}$, for all choices of $m_1$ and $l_1$. Then we have $2^{2^{m_1}3^kl_1} = (2^{2^{m_1}3^{k-1}l_1})^3 > (3^{2k} + 1)^3 > 3^{2(k+1)} + 1$ for all choices of $m_1$ and $l_1$. This completes the induction. If $k \ge 3$ and $3^k \mid 2^n - 1$, then $2^n - 1 = 2^{2^{m_1}3^{m_2}l_1} - 1 \ge 2^{2^{m_1}3^{k-1}l_1} - 1 > 3^{2k}$, contradicting (\ref{inequality one}). Thus, we see that there are no solutions for $k \ge 3$. 

It remains to determine the possible solutions when $k = 1, 2$. Suppose $k = 1$. Then $3^{2k} = 3^2 > 2^n - 1$ implies that $n = 1$, 2, or 3, so we must have $n = 2$ because $n$ is even, hence $m = 1$. Suppose $k = 2$. Then by (\ref{newlabel14}) we have $3^2 = 9 > 2^m - 1$, so $m = 1$, 2, or 3. It is easy to check that the cases $m = 1$ and $m = 2$ do not yield solutions. For $m = 3$, we have $n = 6$.
\end{proof}
\begin{rem}
\label{diophantine explanation}
Central to our proof is the fact that, for $k$ sufficiently large, $2^n - 1 = 3^k \cdot q$ implies $q$ must be much larger than $3^k$.
\end{rem}

We will now solve a more general exponential Diophantine equation using the same ideas in the proof of the preceding proposition. First, we generalize Lemma \ref{power two minus}.
\begin{lem}
\label{power odd minus}
Let $m \ge 3$ be an odd positive integer. Then the following statements hold.
\begin{enumerate}
	\item If $n \in \mathbb{N}$ is odd, then $m \nmid (m - 1)^n - 1$.
	\item If $m_1, l \in \mathbb{N}$, $m_2 \in \mathbb{N} \cup \{0\}$, and $2, m \nmid l$, then $m^{m_2+1} \parallel (m - 1)^{2^{m_1}m^{m_2}l} - 1$.
\end{enumerate}
\end{lem}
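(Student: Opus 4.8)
The plan is to handle the two parts separately. Part 1 is an immediate congruence: since $m-1 \equiv -1 \pmod m$ and $n$ is odd, $(m-1)^n - 1 \equiv (-1)^n - 1 \equiv -2 \pmod m$, and as $m \ge 3$ we have $m \nmid 2$, whence $m \nmid (m-1)^n - 1$. Part 2, which generalizes Lemma \ref{power two minus}, I would prove by induction on $m_2$, imitating that proof while being careful at each point where it quietly used that its base $3$ is prime.

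For the base case $m_2 = 0$ of Part 2, write $(m-1)^{2^{m_1}l} - 1 = \bigl((m-1)^2\bigr)^{2^{m_1-1}l} - 1 = \bigl(1 + m(m-2)\bigr)^{2^{m_1-1}l} - 1$ (using $(m-1)^2 = 1 + m(m-2)$ and $m_1 \ge 1$), expand by the binomial theorem, and pull out one factor of $m$ to obtain $m\bigl(2^{m_1-1}l(m-2) + m\cdot(\text{integer})\bigr)$. The bracketed factor is not divisible by $m$: since $m$ is odd, $\gcd(m,2)=1$ and $\gcd(m,m-2)=\gcd(m,2)=1$, and $m \nmid l$, so $m \nmid 2^{m_1-1}l(m-2)$ — here one invokes the elementary fact, valid for composite $m$ too, that $\gcd(m,a)=1$ and $m \mid ab$ force $m \mid b$. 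This gives $m \parallel (m-1)^{2^{m_1}l} - 1$. For the inductive step, assume $(m-1)^{2^{m_1}m^{m_2}l} - 1 = m^{m_2+1}l_1$ with $m \nmid l_1$, so $(m-1)^{2^{m_1}m^{m_2}l} = 1 + m^{m_2+1}l_1$; raising to the $m$-th power and expanding,
\[ (m-1)^{2^{m_1}m^{m_2+1}l} - 1 = m^{m_2+2}\Bigl(l_1 + \binom{m}{2}m^{m_2}l_1^2 + \sum_{i=3}^{m}\binom{m}{i}m^{(i-1)(m_2+1)-1}l_1^i\Bigr). \]
It then suffices to check that every summand in the parentheses other than $l_1$ is divisible by $m$: for $i \ge 3$ the exponent satisfies $(i-1)(m_2+1)-1 \ge 2(m_2+1)-1 \ge 1$, while the $i=2$ term $\binom{m}{2}m^{m_2}l_1^2$ is divisible by $m$ trivially when $m_2 \ge 1$ and, when $m_2 = 0$, because $\binom{m}{2} = m\cdot\frac{m-1}{2}$ is an integer multiple of $m$ (here is where $m$ odd is used). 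Hence the parenthesized factor is $\equiv l_1 \not\equiv 0 \pmod m$, so $m^{m_2+2} \parallel (m-1)^{2^{m_1}m^{m_2+1}l} - 1$, closing the induction.

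The only genuine obstacle is the loss of primality relative to Lemma \ref{power two minus}: for a composite $m$, neither "$m \nmid l \Rightarrow \gcd(m,l)=1$" nor "$m \mid \binom{m}{i}$ for $0 < i < m$" holds, so the argument must be arranged so that divisibility by $m$ in the delicate terms comes either from a power of $m$ already present (the terms with $i \ge 3$, and the $i=2$ term once $m_2 \ge 1$) or from the explicit identity $\binom{m}{2} = m\cdot\frac{m-1}{2}$ with $m$ odd (the $i=2$, $m_2=0$ term), and so that the non-divisibility in the base case is extracted from $\gcd(m,m-2)=1$ rather than from primality. Beyond this reorganization the computations are routine binomial bookkeeping.
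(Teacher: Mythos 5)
Your proof is correct and follows essentially the same route as the paper's: the identical congruence argument for Part 1, and for Part 2 an induction on $m_2$ whose inductive step coincides with the paper's, including the observation that $\binom{m}{2} = m\cdot\frac{m-1}{2}$ handles the $i=2$ term because $m$ is odd. The only difference is cosmetic: in the base case you expand $\bigl(1+m(m-2)\bigr)^{2^{m_1-1}l}$ whereas the paper expands $(m-1)^{2^{m_1}l} = \sum_i \binom{2^{m_1}l}{i}m^i(-1)^{2^{m_1}l-i}$ directly, but both reduce the key non-divisibility to the same elementary fact that $\gcd(m,a)=1$ and $m \nmid l$ imply $m \nmid al$.
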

\begin{proof}
We consider the two cases separately.
\begin{enumerate}
	\item We have $(m - 1)^n - 1 \equiv (-1)^n - 1 \equiv -1 - 1 \equiv -2 \pmod{m}$, from which the conclusion follows.
	\item Our proof is by induction on $m_2$. 
	
	For $m_2 = 0$, we have $(m - 1)^{2^{m_1}l} - 1 = \sum_{i=0}^{2^{m_1}l}{{2^{m_1}l \choose i}m^i(-1)^{2^{m_1}l-i}} - 1 = -1 + 1 + \sum_{i=1}^{2^{m_1}l}{{2^{m_1}l \choose i}m^i(-1)^{2^{m_1}l-i}} = \sum_{i=1}^{2^{m_1}l}{{2^{m_1}l \choose i}m^i(-1)^{2^{m_1}l-i}} =$ \begin{equation}\label{m div max} m\left(\left(\sum_{i=2}^{2^{m_1}l}{{2^{m_1}l \choose i}m^{i-1}(-1)^{2^{m_1}l-i}}\right) - 2^{m_1}l\right). \end{equation} Notice that $m \mid \sum_{i=2}^{2^{m_1}l}{{2^{m_1}l \choose i}m^{i-1}(-1)^{2^{m_1}l-i}}$, but $m \nmid 2^{m_1}l$ because $m \nmid l$ and $\gcd(m,2^{m_1}) = 1$. Thus, $m \nmid \sum_{i=2}^{2^{m_1}l}{{2^{m_1}l \choose i}m^{i-1}(-1)^{2^{m_1}l-i}} - 2^{m_1}l$ and so $m \parallel (m - 1)^{2^{m_1}l} - 1$ as desired. 
	
	Suppose for some $m_2 \ge 0$ we have $(m - 1)^{2^{m_1}m^{m_2}l} - 1 = m^{m_2+1}l_1$ where $m \nmid l_1$. Then $(m - 1)^{2^{m_1}m^{m_2}l} = m^{m_2+1}l_1 + 1$ $\Longrightarrow$ $(m - 1)^{2^{m_1}m^{m_2+1}l} = (m^{m_2+1}l_1 + 1)^m = \sum_{i=0}^m{{m \choose i}(m^{m_2+1}l_1)^i} = 1 + \sum_{i=1}^m{{m \choose i}(m^{m_2+1}l_1)^i}$, so $(m - 1)^{2^{m_1}m^{m_2+1}l} - 1 = \sum_{i=1}^m{{m \choose i}(m^{m_2+1}l_1)^i} = m \cdot m^{m_2+1}l_1 + \sum_{i=2}^m{{m \choose i}(m^{m_2+1}l_1)^i} = m^{m_2+2}l_1 + \sum_{i=2}^m{{m \choose i}m^{(m_2+1)i}l_1^i} = m^{m_2+2}(l_1 + \sum_{i=2}^m{{m \choose i}m^{(m_2+1)(i-1)-1}l_1^i})$. By assumption $m \nmid l_1$. However, $m \mid \sum_{i=2}^m{{m \choose i}m^{(m_2+1)(i-1)-1}l_1^i}$ because $m$ clearly divides ${m \choose i}m^{(m_2+1)(i-1)-1}l_1^i$ for all $i \ge 3$ and ${m \choose 2}m^{m_2}l_1^2 = \frac{m(m - 1)}{2}m^{m_2}l_1^2 = \frac{m-1}{2}m^{m_2+1}l_1^2$ is divisible by $m$ since $m - 1$ is even. It follows that $m \nmid l_1 + \sum_{i=2}^m{{m \choose i}m^{(m_2+1)(i-1)-1}l_1^i}$. This completes the induction.
\end{enumerate}
\end{proof}
\begin{rem}
\label{no power even}
Part 1 of the lemma holds for all integers $m \ge 3$. If $m$ is even, then we can write $m = 2^kp$ where $k, p \in \mathbb{N}$ and $2 \nmid p$, and we can observe (taking $m_1 = k$ and $l = p$) from (\ref{m div max}) that $m^2 \mid (m - 1)^{2^kp} - 1$. Thus, Part 2 of the lemma is false for even $m$.
\end{rem}

\begin{prop}
\label{cyclotomic diophantine gen}
The only solutions $(k,p,q,n)$ in the positive integers of the exponential Diophantine equation $(2n + 1)^k((2n)^p - 1) = (2n)^q - 1$ are $(2,3,6,1)$ and $(1,1,2,n)$ for all positive $n$.
\end{prop}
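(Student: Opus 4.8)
The plan is to repeat the proof of Proposition~\ref{cyclotomic diophantine} almost verbatim, replacing the base $2$ by $2n$ and the prime $3$ by $m := 2n+1$, and using Lemma~\ref{power odd minus} in place of Lemma~\ref{power two minus}. One checks immediately that $(k,p,q,n) = (1,1,2,n)$ and $(2,3,6,1)$ satisfy the equation (the first gives $(2n+1)(2n-1) = (2n)^2-1$, the second $9\cdot 7 = 2^6-1$), so only the reverse inclusion needs work. When $n = 1$ the equation is literally $3^k(2^p-1) = 2^q-1$, so Proposition~\ref{cyclotomic diophantine} gives $(k,p,q) \in \{(1,1,2),(2,3,6)\}$, matching the two asserted families. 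Henceforth assume $n \ge 2$, so $m = 2n+1 \ge 5$ is odd.

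Now run the following steps. (1) From the equation, $(2n)^p - 1 \mid (2n)^q - 1$, so Lemma~\ref{cyclotomic division} gives $p \mid q$; write $q = lp$, and note $l \ge 2$ since $k \ge 1$ forces $(2n)^q - 1 > (2n)^p - 1 > 0$. (2) Factoring, $(2n+1)^k = \sum_{j=0}^{l-1}(2n)^{jp} > (2n)^p - 1$, hence $m^{2k} = (2n+1)^{2k} > (2n)^q - 1$; call this $(\star)$. (3) Since $k \ge 1$, $m \mid (2n)^q - 1 = (m-1)^q - 1$, so Lemma~\ref{power odd minus}(1) shows $q$ is even; write $q = 2^{m_1} m^{m_2} l_1$ with $m_1 \in \mathbb{N}$, $m_2 \in \mathbb{N}\cup\{0\}$, $2 \nmid l_1$, $m \nmid l_1$. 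By Lemma~\ref{power odd minus}(2), $m^{m_2+1} \parallel (m-1)^q - 1$, and since $m^k \mid (m-1)^q - 1$ this forces $m_2 \ge k-1$. (4) Prove by induction on $k \ge 3$ (with $m_1, l_1$ fixed, exactly as in Proposition~\ref{cyclotomic diophantine}) that $m^{2k} < (m-1)^{2^{m_1} m^{k-1} l_1} - 1$: the step is to cube, $(m-1)^{2^{m_1} m^{k} l_1} = \big((m-1)^{2^{m_1}m^{k-1}l_1}\big)^m > (m^{2k}+1)^m \ge m^{2km}+1 \ge m^{2(k+1)}+1$, using $km \ge k+1$. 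Since $q = 2^{m_1}m^{m_2}l_1 \ge 2^{m_1}m^{k-1}l_1$ when $m_2 \ge k-1$, for $k \ge 3$ we get $(2n)^q - 1 = (m-1)^q - 1 \ge (m-1)^{2^{m_1}m^{k-1}l_1} - 1 > m^{2k}$, contradicting $(\star)$. Hence $k \le 2$.

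Finally handle $k = 1$ and $k = 2$. If $k = 1$, then $(\star)$ becomes $(2n+1)^2 > (2n)^q - 1$ with $q$ even; as $(2n)^4 - 1 \ge 16n^4 - 1 > 4n^2+4n+1$ for all $n \ge 1$, we must have $q = 2$, and then the equation reads $(2n+1)((2n)^p-1) = (2n)^2-1 = (2n-1)(2n+1)$, so $(2n)^p = 2n$ and $p = 1$, giving $(1,1,2,n)$. If $k = 2$, then by step (2), $(2n+1)^2 = \sum_{j=0}^{l-1}(2n)^{jp}$ with $l \ge 2$; reducing modulo $(2n)^2 = 4n^2$, the left-hand side is $\equiv 4n+1$, while the right-hand side is $\equiv 1 + 2n$ if $p = 1$ and $\equiv 1$ if $p \ge 2$. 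In the first case $4n^2 \mid 2n$, impossible; in the second $4n^2 \mid 4n$, i.e.\ $n \mid 1$, impossible since $n \ge 2$. So $k = 2$ contributes nothing when $n \ge 2$, and the proof is done.

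The two places I expect to need care are: the base case $k = 3$ of the induction in step (4), which amounts to $(m-1)^{2m^2} - 1 > m^6$ uniformly for $m \ge 5$ (true because $(m-1)^{2m^2} \ge 2^{2m^2} = 4^{m^2}$, which already exceeds $m^6$ by a wide margin); and the case $k = 2$, which, unlike everything else, cannot be closed by a size estimate, since small $p$ survives the bound $(2n)^p - 1 < (2n+1)^2$. Here the congruence modulo $(2n)^2$ is essential, and it is precisely this congruence that isolates $n = 1$ as the only exceptional possibility, which is why that case was split off at the start and dealt with via Proposition~\ref{cyclotomic diophantine}. The rest is a routine transcription of the earlier argument.
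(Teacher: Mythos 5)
Your proof is correct, and its skeleton is exactly the paper's: Lemma \ref{cyclotomic division} to get $p \mid q$, the factorization identity giving $(2n+1)^k = \sum_{j=0}^{l-1}(2n)^{jp}$ and hence $(2n+1)^{2k} > (2n)^q - 1$, Lemma \ref{power odd minus} to force $q = 2^{m_1}(2n+1)^{m_2}l_1$ with $m_2 \ge k-1$, and the induction showing $(2n)^{2^{m_1}(2n+1)^{k-1}l_1} - 1 > (2n+1)^{2k}$ once $k \ge 3$. Where you genuinely diverge is the endgame for $k \le 2$. The paper keeps all $n$ together and uses the pre-squaring inequality $(2n+1)^k > (2n)^p - 1$ to bound $p$ ($p = 1$ when $k = 1$; $p \le 3$ when $k = 2$), then disposes of each residual $p$ by comparing polynomials in $x = 2n$ against $x^q$. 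You instead split off $n = 1$ at the outset and reduce it to Proposition \ref{cyclotomic diophantine}; for $k = 1$ you use the evenness of $q$ together with $(2n+1)^2 > (2n)^q - 1$ to pin $q = 2$ directly; and for $k = 2$, $n \ge 2$ you reduce $(2n+1)^2 = \sum_{j=0}^{l-1}(2n)^{jp}$ modulo $(2n)^2$, getting $4n \equiv 2n$ or $4n \equiv 0 \pmod{4n^2}$, both impossible for $n \ge 2$. That congruence step is a genuinely different (and arguably tidier) idea than the paper's polynomial comparisons: it explains structurally why $n = 1$ is the only survivor at $k = 2$, whereas the paper discovers this by inspecting $p = 3$ inside the inequality $(2n+1)^2 > (2n)^p - 1$. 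Both routes are elementary and complete; yours trades a little case analysis for a little modular arithmetic.
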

\begin{proof}
We know from Lemma \ref{cyclotomic division} that $q = lp$ for some $l \in \mathbb{N}$. Since $k > 0$, we must have $l \ge 2$. Notice that $(2n)^q - 1 = (2n)^{lp} - 1 = ((2n)^p - 1)((2n)^{(l-1)p} + (2n)^{(l-2)p} + \ldots + (2n)^p + 1) = (2n + 1)^k((2n)^p - 1)$, so we have $(2n + 1)^k = (2n)^{(l-1)p} + (2n)^{(l-2)p} + \ldots + (2n)^p + 1 > (2n)^p - 1$. It follows from \begin{equation}\label{added inequality} (2n + 1)^k > (2n)^p - 1 \end{equation} that $(2n + 1)^{2k} > (2n + 1)^k((2n)^p - 1)$, so \begin{equation}\label{inequality two} (2n + 1)^{2k} > (2n)^q - 1. \end{equation} 

Since $(2n + 1)^k \mid (2n)^q - 1$, by Lemma \ref{power odd minus} we have $q = 2^{m_1}(2n + 1)^{m_2}l_1$ where $m_1, l_1 \in \mathbb{N}$ such that $2, 2n + 1 \nmid l_1$ and $m_2 \in \mathbb{N} \cup \{0\}$ such that $m_2 \ge k - 1$. 

We will prove by induction that for every $k \ge 3$ we have $(2n + 1)^{2k} < (2n)^{2^{m_1}(2n+1)^{k-1}l_1} - 1$ for all choices of $n$, $m_1$, and $l_1$. For $k = 3$, observe that $(2n + 1)^{2k} < (2n)^{2^{m_1}(2n+1)^{k-1}l_1} - 1$ for all choices of $n$, $m_1$, and $l_1$; take $x := 2n + 1$, $m_1 = 1 = l_1$ and note that $x^6 < x^{x^2} - 1 < ((x - 1)^2)^{x^2} - 1 = (x - 1)^{2x^2} - 1$ for all $x \ge 3$. Suppose we know, for some value of $k \ge 3$, that $(2n + 1)^{2k} < (2n)^{2^{m_1}(2n+1)^{k-1}l_1} - 1$ $\Longleftrightarrow$ $(2n + 1)^{2k} + 1 < (2n)^{2^{m_1}(2n+1)^{k-1}l_1}$ for all choices of $n$, $m_1$, and $l_1$. Then $(2n)^{2^{m_1}(2n+1)^kl_1} > ((2n + 1)^{2k} + 1)^{2n+1} \ge (2n + 1)^{2k(2n+1)} + 1 \ge (2n + 1)^{2(k+1)} + 1$ for all choices of $n$, $m_1$, and $l_1$. This completes the induction. If $k \ge 3$ and $(2n + 1)^k \mid (2n)^q - 1$, then we have $(2n)^q - 1 = (2n)^{2^{m_1}(2n + 1)^{m_2}l_1} - 1 \ge (2n)^{2^{m_1}(2n+1)^{k-1}l_1} - 1 > (2n + 1)^{2k}$, contradicting (\ref{inequality two}). Thus, there are no solutions for $k \ge 3$. 

It remains to determine the possible solutions when $k = 1, 2$. Consider $k = 1$. Then by (\ref{added inequality}) we have \begin{equation}\label{newlabel6} 2n + 1 > (2n)^p - 1, \end{equation} which for $n = 1$ becomes $4 > 2^p$, which is false for every $p \ge 2$. Notice that as $n$ increases, $(2n)^p - 1$ where $p \ge 2$ increases faster than $2n + 1$. It follows that (\ref{newlabel6}) is false for every $p \ge 2$, so we must have $p = 1$. For $p = 1$, we have $(2n + 1)^k((2n)^p - 1) = (2n + 1)(2n - 1) = (2n)^2 - 1 = (2n)^q - 1$ $\Longrightarrow$ $q = 2$, and $n$ can be any positive integer. Consider $k = 2$. Then by (\ref{added inequality}) we have \begin{equation}\label{newlabel7} (2n + 1)^2 > (2n)^p - 1, \end{equation} which for $n = 1$ becomes $9 > 2^p - 1$, which is false for every $p \ge 4$. Notice that as $n$ increases, $(2n)^p - 1$ where $p \ge 4$ increases faster than $(2n + 1)^2$. It follows that (\ref{newlabel7}) is false for every $p \ge 4$, so we must have $p = 1$, $2$, or $3$. If $p = 3$, then by (\ref{newlabel7}) we must have $n = 1$, so $3^2(2^3 - 1) = 2^q - 1$ $\Longrightarrow$ $q = 6$. Take $x:= 2n$ for simplicity of notation as we check the remaining two cases. Suppose $p = 2$. Then we have $(x + 1)^2(x^2 - 1) = x^q - 1$ $\Longrightarrow$ $x^4 + 2x^3 -2x - 1 = x^q - 1$ $\Longrightarrow$ $x^4 + 2x^3 - 2x = x^q$, so $q \ge 5$. However, it is easy to see that $x^4 + 2x^3 - 2x < x^q$ for $q \ge 5$, so there are no solutions for $p = 2$. Suppose $p = 1$. Then $(x + 1)^2(x - 1) = x^q - 1$ $\Longrightarrow$ $x^3 + x^2 - x - 1 = x^q - 1$ $\Longrightarrow$ $x^3 + x^2 - x = x^q$, so $q \ge 4$. However, it is easy to see that $x^3 + x^2 - x < x^q$ for $q \ge 4$, so there are no solutions for $p = 1$.
\end{proof}

\begin{notn}
\label{newlabel9}
For all $n, m \in \mathbb{N}$ where $n \ge m \ge 2$ there exists a unique $k \in \mathbb{N} \cup \{0\}$ such that $m^k \parallel n$, and we will denote this $k$ by $v_m(n)$.
\end{notn}

Since Lemma \ref{power odd minus} allowed us to solve the Diophantine equation of Proposition \ref{cyclotomic diophantine gen}, a natural question is whether this lemma can be extended to \textit{all} positive integers $m \ge 3$. If this lemma can be extended thus, then we may be able to solve the more general Diophantine equation \begin{equation}\label{newlabel12} (m + 1)^k(m^p - 1) = m^q - 1 \end{equation} using arguments similar to those in the proof of Proposition \ref{cyclotomic diophantine gen}. There may be many different such extensions, some of them stronger than others. For the time being, we state one such possible (rather weak) extension for even integers $m > 3$ as the following

\begin{conj}
\label{newlabel10}
There exists a positive integer $N$ such that for all integers $n \ge N$ and all even integers $m > 3$ we have $(m^{v_m((m-1)^n - 1)})^2 \le (m-1)^n - 1$.
\end{conj}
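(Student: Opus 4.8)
The plan is to show that $v_m\!\big((m-1)^n-1\big)$ is at most about $\log_2 n$, uniformly over even $m>3$, and then to note that such a bound is far below $\tfrac12\log_m\!\big((m-1)^n-1\big)\approx\tfrac n2\log_m(m-1)$ once $n$ exceeds an absolute constant; that constant will be our $N$. First we may dispose of odd $n$: since $m>2$ we have $(m-1)^n\equiv(-1)^n\equiv-1\pmod m$, so $m\nmid(m-1)^n-1$ and hence $v_m\!\big((m-1)^n-1\big)=0$; the asserted inequality then reads $1\le(m-1)^n-1$, which holds. From now on assume $n$ is even and write $M=(m-1)^n-1$.

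The central estimate is: for every even $m>3$ and every even $n$, $v_m(M)\le 1+\log_2 n$. Here I would use the elementary fact that $v_m(M)=\min_{p\mid m}\big\lfloor v_p(M)/v_p(m)\big\rfloor$, together with a lifting-the-exponent computation in the spirit of Lemmas \ref{power two minus} and \ref{power odd minus}. Choose a prime $p\mid m$ as follows. If $m$ is not a power of $2$, let $p$ be an odd prime dividing $m$ and set $b=v_p(m)\ge1$. Then $m-1\equiv-1\pmod p$, so $\ord_p(m-1)=2$, and for $n=2s$ lifting the exponent gives
\[
v_p(M)=v_p\!\big((m-1)^2-1\big)+v_p(s)=v_p\!\big(m(m-2)\big)+v_p(n)=b+v_p(n),
\]
using $p\nmid m-2$. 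Hence $v_m(M)\le\lfloor v_p(M)/b\rfloor=1+\lfloor v_p(n)/b\rfloor\le 1+v_p(n)\le 1+\log_2 n$. If instead $m=2^a$ with $a\ge2$, take $p=2$; since $m-2=2(2^{a-1}-1)$ with $2^{a-1}-1$ odd we have $v_2(m-2)=1$, so the $2$-adic lifting-the-exponent identity yields $v_2(M)=v_2(m-2)+v_2(m)+v_2(n)-1=a+v_2(n)$, and therefore $v_m(M)=\lfloor(a+v_2(n))/a\rfloor=1+\lfloor v_2(n)/a\rfloor\le 1+v_2(n)\le 1+\log_2 n$. This establishes the estimate in all cases.

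To conclude, write $v=v_m(M)\le 1+\log_2 n$, so that $m^{2v}\le m^{2+2\log_2 n}=m^2\,n^{2\log_2 m}$, using $m^{\log_2 n}=n^{\log_2 m}$. Since $M\ge\tfrac12(m-1)^n$, it suffices to verify $2m^2\,n^{2\log_2 m}\le(m-1)^n$; taking logarithms to base $m$ and using $\log_2 m\cdot\log_m n=\log_2 n$, this becomes $\log_m 2+2+2\log_2 n\le n\log_m(m-1)$. For $m\ge4$ one has $\log_m 2\le\tfrac12$, and since $t\mapsto\log(t-1)/\log t$ is increasing on $[2,\infty)$ (because $t\log t>(t-1)\log(t-1)$) one has $\log_m(m-1)\ge\log_4 3$; so it is enough that $\tfrac52+2\log_2 n\le(\log_4 3)\,n$. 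The function $(\log_4 3)\,n-\tfrac52-2\log_2 n$ is increasing for $n\ge4$ and positive at $n=14$, hence positive for all $n\ge14$. Together with the trivial odd case, this shows $N=14$ may be taken.

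Most of this is routine once the structure is in place; the points that require genuine care are the two lifting-the-exponent identities and the reduction $v_m(M)=\min_{p\mid m}\lfloor v_p(M)/v_p(m)\rfloor$, which lets one handle a composite modulus by working one prime at a time. In the pure power-of-two case one must specifically notice that $v_2(m-2)=1$ (because $m=2^a$ with $a\ge2$ forces $m-2\equiv2\pmod 4$), not something larger. After that, the contrast between the $O(\log n)$ bound on $v_m(M)$ and the linear growth of $\log_m M$ makes both the conclusion and the explicit value of $N$ immediate.
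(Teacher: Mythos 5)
The statement you are proving is not proved in the paper at all: it is stated there as Conjecture \ref{newlabel10}, offered as a possible route toward extending Lemma \ref{power odd minus} to even moduli. So there is no proof of record to compare with; what you have written is, as far as I can check, a correct resolution of the conjecture, and in fact a much stronger one. The skeleton is sound: the odd-$n$ case gives $v_m((m-1)^n-1)=0$ exactly as in part 1 of Lemma \ref{power odd minus} and Remark \ref{no power even}; the reduction $v_m(M)=\min_{p\mid m}\lfloor v_p(M)/v_p(m)\rfloor$ is immediate from unique factorization; the two lifting-the-exponent computations are applied correctly (in the odd-prime case $p\mid m$ forces $p\nmid m-2$, and in the case $m=2^a$ the observation $v_2(m-2)=1$ is exactly the point where the argument could have gone wrong and does not); and the resulting bound $v_m((m-1)^n-1)\le 1+\log_2 n$, uniform in even $m>3$, is far stronger than what the conjecture asks for, since $\log_m((m-1)^n-1)$ grows linearly in $n$ with slope at least $\log_4 3$. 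I verified the closing numerics: $\tfrac52+2\log_2 n\le(\log_4 3)n$ holds for $n\ge 13$ and the left-minus-right difference is monotone beyond $n=4$, so $N=14$ (indeed $N=13$) is valid, and small cases such as $m=4$, $n=2$, where $v_4(3^2-1)=1$ and $16>8$, confirm that some threshold $N$ is genuinely needed. The one thing a final write-up should supply is a proof or citation for the two lifting-the-exponent identities you invoke ($v_p(x^s-1)=v_p(x-1)+v_p(s)$ for odd $p\mid x-1$, and $v_2(x^n-1)=v_2(x-1)+v_2(x+1)+v_2(n)-1$ for odd $x$ and even $n$); these are standard and can be proved by inductions of exactly the kind used for Lemmas \ref{power two minus} and \ref{power odd minus}, so this is a presentational gap rather than a mathematical one. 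Note also that Notation \ref{newlabel9} defines $v_m$ only for arguments at least $m$, which is automatic here since $(m-1)^n-1\ge(m-1)^{13}-1>m$ for $n\ge N$. Since the paper states this as an open conjecture motivated by Equation (\ref{newlabel12}), your quantitative bound $v_m((m-1)^n-1)=O(\log n)$ is likely to be more useful for that program than the conjectured inequality itself.
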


\end{document}